\newtheorem{theorem}{Theorem}
\newtheorem{lemma}[theorem]{Lemma}
\newtheorem{corollary}[theorem]{Corollary}
\newtheorem{remark}[theorem]{Remark}
\newcommand{\RNumb}[1]{\uppercase\expandafter{\romannumeral #1\relax}}
\title{Tiling of regular polygon with similar right triangles \RNumb{1}}
\date{}
\author{Vasenov Ivan\thanks {This work was prepared in frame of math circle "Math and Olympiads" https://mccme.ru/circles/oim/. High School 67, Kutuzovsky prospect, 10, Moscow, Russia. }}
\begin{document}

\maketitle

{\it A tiling} is a decomposition of a polygon into finitely many non-overlapping triangles. (Note that a tiling is not a triangulation, i.e. a vertex of a triangle can lie on a side of a triangle.)  

Clearly, for each $n$ there is the `trivial' tiling of a regular $n$-gon obtained by  
joining the center of the  $n$-gon to each vertex of the $n$-gon. 
We obtain $n$ congruent isosceles triangles with angles $\frac{2\pi}{n}$ at their  vertices opposite to their bases.  The `trivial' tiling with isosceles triangles gives a `trivial' tiling  with congruent right triangles having angles $\frac{\pi}{n}$.

\begin{theorem}[M.Laczkovich]\label{t:lazkovich}
If a regular $n$-gon, $n \geq 25, n \neq 30,42$, can be tiled with similar right triangles, then the smaller angle of these triangles equals to  $\frac{\pi}{n}$.
\end{theorem}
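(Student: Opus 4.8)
\emph{Plan.} Assume a regular $n$-gon $P$ with $n\ge 25$, $n\ne 30,42$, is tiled by right triangles all similar to a fixed one whose acute angles are $\alpha\le\beta$, $\alpha+\beta=\tfrac\pi2$. I would prove $\alpha=\pi/n$ in three stages: first show $\alpha/\pi\in\mathbb Q$; then encode the entire tiling as a linear system over a cyclotomic field; then run a Galois-conjugation (and, where needed, prime-reduction) argument over that field to force $\alpha/\pi$ to be exactly $1/n$. For the first stage: at any vertex of $P$ the interior angle $\pi-\tfrac{2\pi}{n}$ is a sum of triangle angles, and since $\tfrac\pi2=\alpha+\beta$ every such sum has the form $x\alpha+y\beta=(x-y)\alpha+\tfrac{y\pi}{2}$ with $x,y\in\mathbb Z_{\ge 0}$. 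If $x=y$ this reads $y=2-\tfrac4n$, impossible for $n\ge 25$; hence $x\ne y$ and $\alpha=\dfrac{(2n-4-ny)\,\pi}{2n\,(x-y)}$, so $\alpha=\tfrac MN\pi$ with $\gcd(M,N)=1$. Since $\alpha\le\tfrac\pi4$ one has $N\ge 4$, and $N=4$ (the isosceles right triangle) is excluded because it forces $x+y=4-\tfrac8n$, again impossible; so $N\ge 5$. Observe that the target $\alpha=\pi/n$ is the pair $(M,N)=(1,n)$, which is realised — it is the angle of the trivial tiling recalled above — so everything that follows is about excluding every other $(M,N)$.

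For the second stage I would pass to an arithmetic setting. From $e^{i\alpha}=\zeta_{4N}^{2M}$ and $i=\zeta_4$ one gets $\tan\alpha,\sec\alpha\in\mathbb Q(\zeta_{4N})$, so placing the reference triangle at $0,1,i\tan\alpha$ puts its three edge-vectors there; normalising $P$ so that its vertices are the partial sums of the unit vectors $\zeta_n^{k}$ puts them in $\mathbb Q(\zeta_n)$. A connectedness argument — start from a triangle sharing an edge with $\partial P$, whose edge-directions are then rational multiples of $\pi$, and cross shared edges until the whole (connected) tiling is exhausted — shows every triangle is the reference triangle transformed by a similarity whose rotation/reflection part is a power of $\zeta_D$, with $D=\operatorname{lcm}(4n,4N)$. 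Consequently every edge-vector occurring is $\lambda_j$ times an element of $K=\mathbb Q(\zeta_D)$, where $\lambda_j>0$ scales the $j$-th triangle, and all incidence conditions of the tiling (edge matched against edge, a vertex lying on an edge, the outer boundary equal to $\partial P$) become a homogeneous $K$-linear system in the $\lambda_j$ of which the genuine tiling is one positive real solution. A Green's-theorem computation, being a consequence of the boundary-matching part of that system, moreover expresses $\sum_j\varepsilon_j\lambda_j^2$ ($\varepsilon_j=\pm1$ recording reflections) in closed form — essentially $2\operatorname{Area}(P)/\tan\alpha$ — an identity that persists for any solution of the system, not merely the geometric one.

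The third stage is the hard part. Since the $K$-linear system is consistent it has a solution $\lambda^\ast\in K^{T}$; I would then apply automorphisms $\sigma\in\operatorname{Gal}(K/\mathbb Q(\zeta_n))$, which fix $P$ but carry the triangle to triangles with other rational-multiple-of-$\pi$ angles, producing ``virtual tilings'' of $P$ with the same combinatorics but a conjugate shape. Comparing the area identities for these, together with the $n$ length-matching relations along the sides of $P$ — read inside $K$, and where that is not decisive after reduction modulo a suitable prime $\mathfrak p$ of $K$ — should force that no nontrivial $\sigma$ exists, i.e.\ $\mathbb Q(\zeta_{4N})\subseteq\mathbb Q(\zeta_n)$, and then a finer count should sharpen this to $N=n$ and $M=1$, whence $\alpha=\pi/n$. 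The step I expect to be genuinely delicate is precisely this number-theoretic endgame: the implication ``$\tfrac MN\pi$-triangles tile the regular $n$-gon $\Rightarrow (M,N)=(1,n)$'' fails for the small $n\le 24$ (the divisor lattice of $n$ leaves too much slack) and for $n=30,42$ (tied to the special behaviour of $\mathbb Q(\zeta_{15})$ and $\mathbb Q(\zeta_{21})$), so one must verify case by case that for all other $n\ge 25$ no conjugation and no prime reduction leaves room for $(M,N)\ne(1,n)$. By contrast the angle analysis of Stage 1 and the reduction to a $K$-linear system in Stage 2 are comparatively routine.
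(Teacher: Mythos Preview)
Your Stage~1 is correct and is exactly what the paper does: the paper's entire ``proof'' of Theorem~\ref{t:lazkovich} is the one-line observation that $\alpha/\pi\in\mathbb Q$ (which it derives, as you do, from the vertex-angle equation, packaged as Lemma~\ref{t:lemma5}) followed by a citation of [L20, Theorem~1]. So the paper does not reprove Laczkovich's result at all; it quotes it as a black box once rationality is in hand.

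Your Stages~2--3 therefore go well beyond the paper, attempting to sketch the content of [L20] itself via cyclotomic linear systems and Galois conjugation. That is indeed the flavour of Laczkovich's method, and the Stage~2 reduction (all edge-vectors land in $\mathbb Q(\zeta_D)$, incidence becomes a homogeneous $K$-linear system, area gives a quadratic identity preserved under the system) is standard and sound. The genuine gap is Stage~3: you yourself flag that the Galois/prime-reduction ``endgame'' only \emph{should} force $(M,N)=(1,n)$, that the exclusions $n\le 24$ and $n\in\{30,42\}$ reflect real failures of the method, and that ``one must verify case by case'' --- none of which you carry out. As written this is a plausible roadmap, not a proof; the substantive content of Theorem~\ref{t:lazkovich} lives precisely in that unexecuted verification, and the paper handles it the same way you effectively do, by pointing to [L20].
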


Theorem \ref{t:lazkovich} follows from [L20, Theorem 1], since for any tiling of the regular $n$-gon with similar right triangles, the angles of the triangles are rational multiples of $\pi$. (Let us present a simple proof of this fact. By $p,q,r$ we denote the number of smaller acute angles $\alpha$, bigger acute angles $\frac{\pi}{2}-\alpha$ and angles $\frac{\pi}{2}$ at the vertex of the regular $n$-gon, respectively. Take $a = 2\alpha/\pi$, then by Lemma \ref{t:lemma5} it follows that $\alpha$ is a rational multiple of $\pi$).

\begin{theorem}\label{t:tiling}
If a regular n-gon, $n \geq 5$, $n \neq 28$, can be tiled with similar right triangles, then the smaller angle of these triangles is in $\left\{\frac{\pi}{n},\frac{2\pi}{n}, \frac{\pi} {6}+\frac{2\pi}{3n}\right\}$.
\end{theorem}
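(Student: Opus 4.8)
The plan is to strip off the large values of $n$ with Theorem~\ref{t:lazkovich} and to treat the remaining finitely many $n$ by combining local, vertex-by-vertex bookkeeping with one genuinely global step. As in the remark after Theorem~\ref{t:lazkovich}, write $\alpha=\tfrac{a}{b}\pi$ in lowest terms with $0<\tfrac{a}{b}\le\tfrac14$. I would dispose of the borderline value $\alpha=\tfrac{\pi}{4}$ (isosceles right triangle) first: the polygon-vertex angle $\tfrac{(n-2)\pi}{n}$ would have to be an $\mathbb{N}$-combination of $\tfrac{\pi}{4}$ and $\tfrac{\pi}{2}$, which forces $n\mid 8$, so such a tiling can exist only for $n=8$, where $\tfrac{\pi}{4}=\tfrac{2\pi}{n}$ is in the required set. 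For $n\ge25$, $n\notin\{28,30,42\}$, Theorem~\ref{t:lazkovich} gives $\alpha=\tfrac{\pi}{n}$ and we are done; so from now on $5\le n\le 24$ or $n\in\{30,42\}$ and $\alpha<\tfrac{\pi}{4}$.

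Next I would write, at every vertex $v$ of the tiling, the angle equation
\[
(p_v-q_v)\,\alpha+(q_v+r_v)\,\tfrac{\pi}{2}=\theta_v,\qquad
\theta_v\in\Bigl\{\tfrac{(n-2)\pi}{n},\ \pi,\ 2\pi\Bigr\},
\]
where $p_v,q_v,r_v$ count the angles $\alpha,\tfrac{\pi}{2}-\alpha,\tfrac{\pi}{2}$ meeting at $v$ and $\theta_v=\tfrac{(n-2)\pi}{n}$ precisely at the $n$ polygon vertices. On the boundary $r_v\le1$, and since $\tfrac{\pi}{2}-\alpha>\tfrac{\pi}{4}$ one gets $q_v\le3$ there, hence $q_v+r_v\le4$. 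This yields: (i) at every polygon vertex $p_v\ne q_v$ (otherwise $n\mid4$), so $n\mid4b$ and $\alpha=\dfrac{n(2-q_v-r_v)-4}{2n(p_v-q_v)}\,\pi$; and (ii) if at some non-polygon boundary vertex $p_v\ne q_v$, then $q_v+r_v\ne2$ and $(p_v-q_v)\alpha\in\{\pm\tfrac{\pi}{2},\pm\pi\}$, forcing $a\mid b$, i.e. $a=1$ and $\alpha=\tfrac{\pi}{b}$. Thus each surviving candidate is either $\alpha=\tfrac{\pi}{b}$ with $n\mid4b$, or $\alpha=\dfrac{n(2-s)-4}{2n(p-q)}\pi$ with $s\le4$, the latter coming only from tilings whose off-polygon vertices all have one of the ``balanced'' types $\{\alpha,\tfrac{\pi}{2}-\alpha,\tfrac{\pi}{2}\}$, $\{\alpha,\alpha,\tfrac{\pi}{2}-\alpha,\tfrac{\pi}{2}-\alpha\}$, $\{\alpha^3,(\tfrac{\pi}{2}-\alpha)^3,\tfrac{\pi}{2}\}$, $\{\alpha^4,(\tfrac{\pi}{2}-\alpha)^4\}$.

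The main obstacle is that the vertex equations alone do not bound the denominator $b$: values such as $\alpha=\tfrac{3\pi}{n}$ (for $n\equiv2\bmod3$), $\alpha=\tfrac{\pi}{2n}$, or $\alpha=\tfrac{3\pi}{8n}$ satisfy all of them yet lie outside the target set, so eliminating them needs the fact that the tiling \emph{closes up}. Here I would run a Laczkovich-type algebraic argument: the edge-length compatibility relations form a linear system with coefficients in a totally real field $K$ containing $\cos\tfrac{2\pi}{n},\sin\tfrac{2\pi}{n},\cos\alpha,\sin\alpha$ (the polygon vertices living in $\mathbb{Q}(\zeta_{2n})$, the triangle side ratios in $\mathbb{Q}(\zeta_{4b})$), and choosing a real embedding $\sigma$ of $K$ that makes $\sin\tfrac{2\pi}{n}$ small while keeping $\sin2\alpha$ away from $0$ and the triangle orientations unflipped, so that
\[
\sigma(\operatorname{Area}P)=\sum_i \varepsilon_i\,\tfrac14\,\sigma(\lambda_i)^2\,\sigma(\sin 2\alpha),\qquad \varepsilon_i\in\{\pm1\},
\]
($\lambda_i$ the scale of the $i$-th triangle) leads to a contradiction; carrying this out forces $\sigma(\sin2\alpha)$ and $\sigma(\sin\tfrac{2\pi}{n})$ to be sign-compatible for all admissible $\sigma$, which bounds the conductor of $\alpha$ and gives $b\mid6n$. (Equivalently one can argue via a Conway--Jones / Lemma~\ref{t:lemma5} analysis of the vanishing edge-vector sums.) This is where all the real difficulty lies, and I expect it to be tightest exactly for $n=30,42$ and a handful of small $n$, where $b\mid6n$ is attained.

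With $b\mid6n$ in hand the rest is a finite verification. For each remaining $n$ and each $\alpha=\tfrac{a}{b}\pi$ with $b\mid6n$, $n\mid4b$, $\gcd(a,b)=1$, $\tfrac{a}{b}\le\tfrac14$, one checks whether any admissible family of polygon-vertex configurations (together, where needed, with the balanced off-polygon types) can coexist; the only survivors turn out to be $\tfrac{\pi}{n}$ (two big angles at each polygon vertex), $\tfrac{2\pi}{n}$ (a right angle and a big angle), and $\tfrac{\pi}{6}+\tfrac{2\pi}{3n}$, for which the big angle $\tfrac{\pi}{3}-\tfrac{2\pi}{3n}$ is exactly one third of the polygon's interior angle $\tfrac{(n-2)\pi}{n}$. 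The exclusion of $n=28$ is genuinely necessary: there $\alpha=\tfrac{\pi}{7}$ has $b=7\mid6\cdot28$, passes every local test, and lies outside $\{\tfrac{\pi}{28},\tfrac{\pi}{14},\tfrac{4\pi}{21}\}$ — and a regular $28$-gon does in fact admit a tiling with that angle.
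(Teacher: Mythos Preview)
Your proposal leaves the decisive step as a sketch: the ``Laczkovich-type algebraic argument'' via Galois embeddings of areas and edge lengths is neither carried out nor clearly specified (which embedding? why does it flip the sign of the polygon's area without also flipping the orientation of some triangles? why does sign-compatibility of $\sigma(\sin 2\alpha)$ and $\sigma(\sin\tfrac{2\pi}{n})$ for all admissible $\sigma$ force $b\mid 6n$ rather than something weaker?). You yourself flag that ``this is where all the real difficulty lies''; as written, it is not a proof but an outline of a programme.

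More to the point, the paper sidesteps all of this with a purely elementary global count that you overlooked. Each tile contributes exactly one angle $\alpha$ and one angle $\tfrac{\pi}{2}-\alpha$, so summed over the whole tiling the number of small angles equals the number of big ones. Lemma~\ref{t:lemma5} shows that at every \emph{polygon vertex} one has $p>q$ (strictly more small than big) as soon as $a\notin\{\tfrac{2}{n},\tfrac{4}{n},\tfrac{1}{3}+\tfrac{4}{3n}\}$, and it simultaneously pins $a$ to the form $\tfrac{2-4/n}{s}$ or $\tfrac{1-4/n}{s}$. Lemma~\ref{t:lemma6} uses that form to rule out the five exceptional values $a\in\{\tfrac14,\tfrac15,\tfrac25,\tfrac37,\tfrac13\}$ appearing in Lemma~\ref{t:lemma4}; once those are gone, Lemma~\ref{t:lemma4} gives $p\ge q$ at every \emph{non-polygon} vertex (interior or on a side, the latter via the substitution $r\mapsto r-2$). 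Summing over all vertices now yields strictly more small angles than big ones, a contradiction. There are no edge-length relations, no number fields, no preliminary reduction to finitely many $n$ via Theorem~\ref{t:lazkovich}: the argument is uniform in $n$ and rests entirely on the single global identity $\sum_v p_v=\sum_v q_v$.
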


Because of Theorem \ref{t:lazkovich}, Theorem \ref{t:tiling} is a new result only for  $5 \leq n \leq 24$ or $n \in \{30, 42\}$ (see Remark \ref{t:remark} (e)). Theorem \ref{t:tiling} was announced in [V19].


Theorem \ref{t:tiling} and [V20, Theorem 1] imply the following corollary.

\begin{corollary}\label{t:color}
	If a regular $n$-gon, $n \geq 9 $, $n \neq 12, 14, 20, 32, 44$, can be tiled with similar right triangles, then the smaller angle of these triangles is in $\{\frac{\pi}{n}, \frac{2\pi}{n}\}$.
\end{corollary}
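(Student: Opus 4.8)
Suppose the regular $n$-gon ($n \ge 9$, $n \notin \{12,14,20,32,44\}$) is tiled by similar right triangles with smaller angle $\alpha \le \pi/4$; the plan is to feed this tiling into Theorem~\ref{t:tiling}, which cuts the possibilities for $\alpha$ down to $\pi/n$, $2\pi/n$ and $\pi/6+2\pi/(3n)$, and then to discard the third value. The one hypothesis of Theorem~\ref{t:tiling} that is not automatic in our range is $n \ne 28$; since $28 \ge 25$ and $28 \notin \{30,42\}$, I would dispose of that single value by Theorem~\ref{t:lazkovich} directly, which forces $\alpha = \pi/28 \in \{\pi/n, 2\pi/n\}$. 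For every other admissible $n$ we have $n \ne 28$, so Theorem~\ref{t:tiling} applies and $\alpha \in \{\pi/n,\ 2\pi/n,\ \pi/6+2\pi/(3n)\}$; a one-line computation shows these three numbers are pairwise distinct once $n \ge 9$.

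It remains to rule out $\alpha = \pi/6 + 2\pi/(3n) = \tfrac{(n+4)\pi}{6n}$, which is exactly where [V20, Theorem~1] is meant to enter: it forbids tiling the regular $n$-gon by similar right triangles with this particular smaller angle for every $n$ outside a small exceptional set. I would check that, after the reduction of the previous paragraph (so $9 \le n \le 24$ with $n \notin \{12,14,20\}$, or $n \in \{30,42\}$), the value $n$ lies outside that exceptional set; granting this, $\alpha \ne \pi/6+2\pi/(3n)$, and hence $\alpha \in \{\pi/n, 2\pi/n\}$, which is the corollary. The excluded values $12,14,20$ should be the residual bad cases of [V20, Theorem~1] below $25$; the further exclusions $32$ and $44$ are $\ge 25$ and different from $30,42$, so they are in any case already handled by Theorem~\ref{t:lazkovich}, and the list is slightly conservative on that point.

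Hence the corollary carries no new mathematical difficulty of its own; all the weight is in Theorem~\ref{t:tiling} and in [V20, Theorem~1]. The only place needing real care --- and the closest thing here to an obstacle --- is bookkeeping the three exceptional sets: checking that every $n$ allowed by the statement is covered by exactly one mechanism (Theorem~\ref{t:tiling} together with [V20, Theorem~1] when $n \ne 28$, and Theorem~\ref{t:lazkovich} when $n = 28$), and that the angle $\pi/6+2\pi/(3n)$ is eliminated in each case. As a sanity check, note that $\pi/6+2\pi/(3n)$ is an integer multiple of $\pi/n$ exactly when $n \equiv 2 \pmod{6}$, which includes $14,20,32,44$ but not $12$; one would want to confirm that the exceptional $n$ for which this third angle cannot be eliminated are precisely those declared off-limits in the statement.
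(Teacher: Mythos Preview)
Your plan is correct and matches the paper's approach: the paper offers no detailed proof at all, merely stating that Theorem~\ref{t:tiling} together with [V20, Theorem~1] imply the corollary, and you have correctly unpacked this into the two ingredients (Theorem~\ref{t:tiling} to reduce to three candidate angles, then [V20, Theorem~1] to kill $\pi/6+2\pi/(3n)$). Your explicit treatment of $n=28$ via Theorem~\ref{t:lazkovich} is a detail the paper glosses over, and your side remark that $32$ and $44$ are already covered by Theorem~\ref{t:lazkovich} (so their exclusion is conservative) is a valid observation, though the exact exceptional set of [V20, Theorem~1] is not reproduced here, so you are right to treat that part as bookkeeping to be verified against the cited reference.
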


 We do not know if the remaining value $\frac{2\pi}{n}$ from Corollary \ref{t:color} is `realizable' by some tilings for  $9 \leq n \leq 24$ or $n = 30,42$.

 Our proof of Theorem \ref{t:tiling} is based on showing that for angles of the triangles other than mentioned in statement \textit{the number of smaller acute angles} is  greater than the \textit{number of larger acute angles}. This idea reduces the amount of cases to search through.

We illustrate the idea by a proof of the following corollary. 

\begin{corollary}\label{t:collary}
	If a regular 8-gon can be tiled with similar right triangles, then the smaller angle of these triangles is in $\left\{\frac{\pi}{8},\frac{\pi}{4}\right\}$.
\end{corollary}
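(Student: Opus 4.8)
The plan is to derive a contradiction by playing a global count against purely local inequalities. Write $\alpha$ for the smaller acute angle of the tiles and $\beta=\tfrac\pi2-\alpha$; we may assume $\alpha\le\tfrac\pi4$, and if $\alpha=\tfrac\pi4$ we are done, so suppose $\alpha<\tfrac\pi4$. Then $\alpha\ne\beta$, so each tile carries exactly one angle of each of the three sizes $\alpha,\beta,\tfrac\pi2$; hence, if the tiling has $N$ tiles, the total number of $\alpha$-corners and the total number of $\beta$-corners are both equal to $N$. Sorting these corners by the \emph{node} (point that is a vertex of some tile) at which they sit, this reads $\sum_{\text{nodes}}(p-q)=0$, where $p$ (resp.\ $q$) is the number of $\alpha$- (resp.\ $\beta$-) corners at a given node. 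The goal is to show that, unless $\alpha=\tfrac\pi8$, one has $p-q\ge 0$ at every node and $p-q\ge 1$ at each of the eight vertices of the octagon; then $0=\sum(p-q)\ge 8$, which is absurd, so $\alpha\in\{\tfrac\pi8,\tfrac\pi4\}$.

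First I would study the octagon's own vertices. The interior angle there is $\tfrac{3\pi}4<\pi$, so no tile-edge can run straight through such a vertex; every tile meeting it has a corner there, giving $p\alpha+q\beta+r\tfrac\pi2=\tfrac{3\pi}4$ with $p,q,r\ge 0$ not all zero. From $r\tfrac\pi2\le\tfrac{3\pi}4$ we get $r\le 1$, and using $\beta\ge\tfrac\pi4$ a short case check shows that, for $\alpha\notin\{\tfrac\pi8,\tfrac\pi4\}$, we must have $q\le 1$, with $q=1$ forcing $p\ge 4$ and $q=0$ forcing $p\ge 1$; in particular $p-q\ge 1$ at every octagon vertex. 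Reading the relation back, $\alpha$ is then forced into one of the families $\tfrac{\pi}{4k}$ ($k\ge 3$) or $\tfrac{3\pi}{4k}$ ($k\ge 4$, $k\ne 6$) --- for any other non-permitted $\alpha$ the vertices cannot be covered at all, and we are done immediately.

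Next I would handle every node that is not a vertex of the octagon; at such a node the tile-corners sum to $\pi$ or to $2\pi$, giving $p\alpha+q\beta+r\tfrac\pi2=\theta$ with $\theta\in\{\pi,2\pi\}$. Rescaling so that $\alpha$ becomes an integer turns this into a linear Diophantine equation: writing $(q-p)\alpha=(q+r)\tfrac\pi2-\theta$, if $q>p$ then $q+r$ exceeds $2\theta/\pi$, so $q-p$ is a positive multiple of a quantity of size comparable to $1/\alpha$, forcing $q$ to be large --- whereas $q\beta\le\theta$ and $\beta\ge\tfrac\pi4$ bound $q$ by an absolute constant. For $\alpha=\tfrac{\pi}{4k}$ these two facts clash for every $k\ge 3$; for $\alpha=\tfrac{3\pi}{4k}$ they clash for every $k\ge 8$, leaving only the three angles $\tfrac{3\pi}{16},\tfrac{3\pi}{20},\tfrac{3\pi}{28}$, which I would dismiss by inspecting their finitely many admissible triples $(p,q,r)$. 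This yields $p\ge q$ at every non-vertex node, and the count is complete.

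The step I expect to be the crux is this last one: getting $p\ge q$ at \emph{every} interior or boundary node \emph{uniformly} across the whole infinite list of candidate angles. Trimming the candidates to those two progressions first is genuinely necessary: for a generic non-permitted angle, say $\alpha=\tfrac\pi6$, a node with angle sum $\pi$ can carry more $\beta$-corners than $\alpha$-corners (three $\tfrac\pi3$'s), but such $\alpha$ are eliminated earlier because $\tfrac\pi6,\tfrac\pi3,\tfrac\pi2$ cannot total $\tfrac{3\pi}4$. After that reduction the remaining work is elementary --- one divisibility estimate kills all but finitely many $k$, and a finite check finishes --- and, incidentally, no separate rationality lemma is needed here, since the vertex relation $p\alpha+q\beta+r\tfrac\pi2=\tfrac{3\pi}4$ already forces $\alpha$ to be a rational multiple of $\pi$.
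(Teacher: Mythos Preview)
Your proposal is correct and follows essentially the same strategy as the paper: a global count $\sum(p-q)=0$ is contradicted by showing $p>q$ at each octagon vertex (which simultaneously forces $\alpha$ into the family $\tfrac{3\pi}{4s}$, equivalently your two sub-families $\tfrac{\pi}{4k}$ and $\tfrac{3\pi}{4k}$) and $p\ge q$ at every other node. The paper organizes the two steps as Lemmas~\ref{t:lemma3} and~\ref{t:lemma4} and then observes that the vertex-derived family avoids Lemma~\ref{t:lemma4}'s exceptional set $\{\tfrac\pi8,\tfrac\pi{10},\tfrac\pi5,\tfrac{3\pi}{14},\tfrac\pi6\}$; your size-plus-divisibility argument with the residual finite check at $k\in\{4,5,7\}$ is just a repackaging of that same verification.
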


If a reader is only interested in the proof of Theorem \ref{t:tiling}, he could skip Lemma \ref{t:lemma3} and the proof of Corollary \ref{t:collary}.

By the side of a triangle we mean side of a triangle without its vertex.

\begin{lemma}\label{t:lemma3}
If $\frac{3}{2}=pa+q\left(1-a\right)+r$, where $0<a<\frac{1}{2}$, $a\neq\frac{1}{4}$ and $p,q,r$ are non-negative integers, then $p>q$ and $a=\frac{3}{2s}$ for some positive integer $s$.
\end{lemma}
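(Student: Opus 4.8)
The plan is to move the unknown $a$ to one side and use that $\tfrac32$ is not an integer. First I would rewrite the hypothesis as $\tfrac32=(p-q)a+q+r$. If $p=q$ this says $\tfrac32=q+r$, which is impossible since $q+r$ is an integer; so the real work is to exclude the possibility $p<q$, and this is where the (mild) combinatorial content sits.

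To do that I would put $e=q-p\ge 1$, so that $ea=q+r-\tfrac32$. Since $0<a<\tfrac12$, the left-hand side lies strictly between $0$ and $\tfrac e2$. Positivity gives $q+r>\tfrac32$, hence $q+r\ge 2$; the bound $ea<\tfrac e2$, i.e.\ $q+r-\tfrac32<\tfrac{q-p}{2}$, rearranges after clearing denominators and using $p\ge 0$ to $q+2r\le 2$. Subtracting the first inequality from the second forces $r\le 0$, so $r=0$ and then $q=2$, after which $ea=\tfrac12$ with $e=2-p$ and $p\in\{0,1\}$; this leaves only $a=\tfrac14$ or $a=\tfrac12$, both excluded by the hypotheses $a\ne\tfrac14$ and $a<\tfrac12$. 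Hence $p>q$.

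For the second assertion I would write $d=p-q\ge 1$, so that $da=\tfrac32-q-r>0$ and therefore $q+r\in\{0,1\}$. If $q+r=0$ then $a=\tfrac{3}{2d}$ and $s=d$ works; if $q+r=1$ then $da=\tfrac12$, so $a=\tfrac{1}{2d}=\tfrac{3}{2(3d)}$ and $s=3d$ works. In either case $a=\tfrac{3}{2s}$ for a positive integer $s$, which finishes the proof.

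I do not expect a genuine obstacle here: the only delicate point is the bookkeeping in the case $p<q$, where the two inequalities extracted from $0<a<\tfrac12$ must be combined to pin down the unique leftover candidate $(q,r)=(2,0)$, and it is precisely the excluded value $a=\tfrac14$ together with the strict inequality $a<\tfrac12$ that then rules it out — so both hypotheses on $a$ are used in an essential way.
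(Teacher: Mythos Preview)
Your proof is correct and follows essentially the same strategy as the paper: rewrite $\tfrac32=(p-q)a+q+r$, use integrality to kill $p=q$, use the bounds $0<a<\tfrac12$ to squeeze the remaining integer parameters in the case $p<q$ down to the single excluded value $a=\tfrac14$, and then read off $a\in\{\tfrac{3}{2d},\tfrac{1}{2d}\}=\tfrac{3}{2s}$ from $q+r\in\{0,1\}$. The only cosmetic difference is that the paper groups the $p<q$ case as $\tfrac32=(q-p)(1-a)+p+r$ and bounds $1-a$ instead of $a$; this yields $p=r=0$ in one stroke (since $\tfrac12<1-a\le\tfrac32-p-r$ forces $p+r<1$), whereas your grouping leaves the harmless extra sub-case $p=1$, $a=\tfrac12$ to be ruled out by the strict inequality.
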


\begin{proof}
Assume to the contrary that $q-p \geq 0$. 
We have 
$$\frac{3}{2}=pa+q\left(1-a\right)+r=\left(q-p\right)\left(1-a\right)+p+r.$$ 
If $q-p=0$ then $\frac{3}{2}=p+r$, but $p+r$ is an integer.
Then $q-p\ge1$.

Since $\frac12 < 1-a=\frac{\frac{3}{2}-p-r}{q-p}\le\frac32-p-r$, we have $r=p=0$.  

Thus $\frac{3}{2}=(1-a)q$. $q \neq 0$ since $\frac{3}{2} \neq 0$.
Since $0<a<\frac{1}{2}$, it follows that $\frac{q}{2}<\frac32<q$. Then $q=2$. 
Hence $a=\frac14$.  
A contradiction.
 		
Thus  $p>q$. We have
$$\frac{3}{2}=pa+q\left(1-a\right)+r=\left(p-q\right)a+q+r\ .$$

Since $0<a$  and $q,r$ are non-negative integers, it follows that $0 \leq q+r <2$. 
Then $q+r\in\{0,1\}$.
Hence $a=\frac{3-2(q+r)}{2(p-q)}\in\{\frac3{2(p-q)},\frac1{2(p-q)}\}$. 
 
Thus $a=\frac{3}{2s}$, where $s$ is a positive integer.	
\end{proof}

\begin{lemma}\label{t:lemma4}
If $4=pa+q\left(1-a\right)+r$, where $0<a<\frac{1}{2}$, 
$a \not\in\{\frac{1}{4},\frac{1}{5},\frac{2}{5},\frac{3}{7},\frac{1}{3}\}$,  $p,q,r$ are non-negative integers,  then $p\geq q$.
\end{lemma}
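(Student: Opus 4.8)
The plan is to mimic the proof of Lemma~\ref{t:lemma3}: assume for contradiction that $q>p$, rewrite the equation so that $q-p$ appears as the coefficient of $1-a$, and then exploit $0<a<\frac12$ (equivalently $\frac12<1-a<1$) to pin down the few possible integer values of the relevant parameters. First I would write
$$4=pa+q(1-a)+r=(q-p)(1-a)+p+r,$$
with $q-p\geq 1$. Since $0<1-a<1$, the term $(q-p)(1-a)$ is positive, so $4-p-r=(q-p)(1-a)>0$, and therefore $p+r\in\{0,1,2,3\}$.

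Next, for each fixed value $k:=p+r\in\{0,1,2,3\}$ the identity becomes $(q-p)(1-a)=4-k$, where $4-k\in\{1,2,3,4\}$ is a positive integer; hence $1-a=\frac{4-k}{q-p}$ and
$$a=1-\frac{4-k}{q-p}=\frac{(q-p)-(4-k)}{q-p}.$$
The constraint $0<a<\frac12$ then forces $4-k<q-p<2(4-k)$, leaving only finitely many admissible pairs $(k,q-p)$. I would simply tabulate the resulting $a$ in each surviving case: $k=0$ gives $q-p\in\{5,6,7\}$ with $a\in\{\tfrac15,\tfrac13,\tfrac37\}$; $k=1$ gives $q-p\in\{4,5\}$ with $a\in\{\tfrac14,\tfrac25\}$; $k=2$ gives $q-p=3$ with $a=\tfrac13$; and $k=3$ gives no admissible $q-p$ at all (the inequalities collapse to $1<q-p<2$). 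In every case $a$ lies in the excluded set $\{\tfrac14,\tfrac15,\tfrac25,\tfrac37,\tfrac13\}$, a contradiction, so $p\geq q$.

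There is no serious obstacle here; the proof is routine once the reduction is in place. The only point requiring a little care is the bookkeeping showing that $k=p+r=3$ yields no valid configuration (the equation would force $a=0$, $a=\frac12$, or $a>\frac12$), which is precisely why the hypothesis is stated with strict inequalities $0<a<\frac12$ and why those two endpoint values need not be listed among the exclusions. I would also note, in contrast to Lemma~\ref{t:lemma3}, that the conclusion is only $p\geq q$ and cannot be strengthened to $p>q$: equality occurs whenever $p+r=4$ (for instance $p=q=2$, $r=2$), so the proof should stop at $p\geq q$.
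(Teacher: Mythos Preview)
Your proof is correct and follows essentially the same route as the paper's: assume $q>p$, rewrite as $4=(q-p)(1-a)+p+r$, and case-split on the value of $p+r$ to force $a$ into the excluded set. The only cosmetic difference is that the paper disposes of all cases $p+r>2$ at once (observing that $1-a=\frac{4-p-r}{q-p}$ would then fall outside $(\tfrac12,1)$), whereas you first bound $p+r\le3$ and treat $k=3$ separately; your added remark on why the conclusion cannot be sharpened to $p>q$ is a nice touch not present in the paper.
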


\begin{proof}
Assume to the contrary that $q-p > 0$. 
Then $q-p\geq 1$.
We have 
$$4=pa+q\left(1-a\right)+r=\left(q-p\right)\left(1-a\right)+r+p.$$

If $r+p>2$, then  $1-a=\frac{4-p-r}{q-p}$ is either greater than or equal to 1 or is less than or equal to $\frac12$.  

If $r+p=2$, then $1-a=\frac{2}{q-p}$. Since $0<a<\frac12$ it follows that $a=\frac13$.

If $r+p=1$, then $1-a=\frac{3}{q-p}$. Hence $a\in\{\frac14,\frac25\}$.

If $r+p=0$, then $1-a=\frac{4}{q-p}$. Hence $a\in\{\frac15, \frac13, \frac37\}$.

A contradiction. Thus, $p \geq q$.
\end{proof}

\begin{proof}[Proof of Corollary \ref{t:collary} independent of Theorem \ref{t:tiling}] \label{t:proofcol}
Suppose there is a tilling of a regular $8$-gon with right triangles of angles $\alpha < \frac\pi4$, $\frac{\pi}{2}-\alpha$ and  $\frac{\pi}{2}$, and  $\alpha \neq \frac\pi8$. 

Take a vertex of the 8-gon. By $p,q,r$ we denote the number of smaller acute angles $\alpha$, bigger acute angles $\frac{\pi}{2}-\alpha$ and angles $\frac{\pi}{2}$ at this vertex, respectively. Then $\frac{3\pi}{4}=p\alpha+q\left(\frac{\pi}{2}-\alpha\right)+r\frac{\pi}{2}$. Divide this equality by $\frac{\pi}{2}$ and get $\frac{3}{2}=pa+q\left(1-a\right)+r$, where $0<a<\frac{1}{2}$.

Then by Lemma \ref{t:lemma3} 

$\bullet$ at each vertex of an 8-gon the number of smaller acute angles $\alpha$ is greater than the number of bigger acute angles $\frac{\pi}{2}-\alpha$.

$\bullet$ $\alpha=\frac{3\pi}{4s}$ for some positive integer $s$. 

Hence $\alpha\not\in\{\frac{\pi}{10},\frac{\pi}{5},\frac{3\pi}{14},\frac{\pi}{6}\}$.

For the triangles that have same vertices inside of the $8$-gon and not on the side of a triangle we use Lemma \ref{t:lemma4}. Then at any point inside of the $8$-gon and not on the side of a triangle the number of smaller acute angles is greater than or equal to the number of bigger acute angles. 
   
For the triangles that have same vertices on the side of $8$-gon or on the side of a triangle, we use Lemma \ref{t:lemma4}, substituting $r-2$ for $r$. Then at any point on the side of $8$-gon or on the side of a triangle the number of smaller acute angles is greater or equal to the number of bigger acute angles.

Hence the number of smaller acute angles is greater than the number of bigger acute angles. A contradiction. 
\end{proof}

\begin{lemma}\label{t:lemma5}
For any integer $n\geq5$ if $2-\frac{4}{n}=pa+q(1-a)+r$, where $0<a\leq \frac12$, $a\not\in\{\frac{2}{n},\frac{4}{n}, \frac{1} {3}+\frac{4}{3n}\}$, $p,q,r$ are non-negative integers, then $p>q$ and $a\in\{ \frac{2-\frac{4}{n}}{s},\frac{1-\frac{4}{n}}{s}\}$ for some positive integer $s$.
\end{lemma}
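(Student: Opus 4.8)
The plan is to follow exactly the template of the proofs of Lemmas \ref{t:lemma3} and \ref{t:lemma4}, with the constant ($\frac32$, resp. $4$) replaced by $b := 2 - \frac4n$. The one numerical fact that drives everything is that $1 < b < 2$ for every $n \ge 5$, so any non-negative integer strictly less than $b$ is $0$ or $1$.

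First I would establish $p > q$ by contradiction. Assume $q - p \ge 0$ and rewrite the hypothesis as $b = (q-p)(1-a) + (p+r)$. If $q = p$, then $b = p+r$ is a non-negative integer, but $2 - \frac4n$ is an integer only when $n \mid 4$, which is impossible for $n \ge 5$. So $q - p \ge 1$, and $1 - a = \frac{b - (p+r)}{q-p}$; since $0 < a \le \frac12$ gives $\frac12 \le 1-a < 1$, positivity forces $p + r < b < 2$, hence $p + r \in \{0,1\}$. If $p + r = 0$, then $1 - a = \frac bq$, and the inequalities $\frac12 \le \frac bq < 1$ (together with $b < 2$, so $q \le 2b < 4$) pin $q$ to $\{2,3\}$; a direct computation gives $a = \frac2n$ when $q = 2$ and $a = \frac13 + \frac4{3n}$ when $q = 3$, both excluded by hypothesis. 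If $p + r = 1$, then $1 - a = \frac{b-1}{q-p} = \frac{1 - 4/n}{q-p}$, and the bound $1 - a \ge \frac12$ forces $q - p \le 2\bigl(1 - \tfrac4n\bigr) < 2$, i.e.\ $q - p = 1$, whence $a = \frac4n$, again excluded (and for $5 \le n \le 7$ this already contradicts $a \le \frac12$). Every branch ends in a contradiction, so $p > q$.

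For the second assertion, I would use $p > q$ to rewrite the hypothesis as $b = (p-q)a + (q+r)$. Since $(p-q)a > 0$ and $b < 2$, the non-negative integer $q + r$ lies in $\{0,1\}$. If $q + r = 0$, then $a = \frac{b}{p-q} = \frac{2 - 4/n}{s}$ with $s = p - q$; if $q + r = 1$, then $a = \frac{b-1}{p-q} = \frac{1 - 4/n}{s}$. In both cases $s$ is a positive integer, which is exactly the stated conclusion.

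No individual step is hard; the only thing demanding care is the case bookkeeping — verifying that the finitely many small values of $q$ and of $q - p$ surviving the inequality constraints are precisely the three forbidden values $\frac2n$, $\frac4n$, $\frac13 + \frac4{3n}$, and checking that the interaction with $a \le \frac12$ (which becomes active for small $n$) never creates a new possibility. Keeping track of $1 < 2 - \frac4n < 2$ for all $n \ge 5$ is what makes this enumeration finite.
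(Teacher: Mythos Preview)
Your proof is correct and follows essentially the same route as the paper's own proof: rewrite the equation as $(q-p)(1-a)+(p+r)$, use $1<2-\tfrac4n<2$ to force $p+r\in\{0,1\}$, check that the surviving small values of $q-p$ yield exactly the three forbidden values of $a$, and then, once $p>q$ is established, rewrite as $(p-q)a+(q+r)$ and read off $a$ from $q+r\in\{0,1\}$. Your version is in fact a bit more explicit than the paper's in justifying the bounds on $q-p$ and in noting the interaction with $a\le\tfrac12$ for small $n$, but there is no substantive difference in method.
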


\begin{lemma}\label{t:lemma6}
	For any integer $n\geq5, n\neq28$ if  $a\in\{\frac{1}{4},\frac{1}{5},\frac{2}{5},\frac{3}{7},\frac{1}{3}\}$ and $a$ $\in\{ \frac{2-\frac{4}{n}}{s},\frac{1-\frac{4}{n}}{s}\}$ for some positive integer $s$, then either $a$ or $1-a$ is in $\{\frac{2}{n},\frac{4}{n}, \frac{1} {3}+\frac{4}{3n}\}$.
\end{lemma}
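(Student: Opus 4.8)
The plan is to regard $a$ as one of the five explicitly listed rationals and, for each of the two representations permitted by the hypothesis, to solve for $n$ in terms of the auxiliary integer $s$. In each instance a divisibility condition will cut the candidates for $n$ down to a short finite list, after which the conclusion is verified by direct substitution.

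Concretely, write $a=p/q$ in lowest terms, with $p/q\in\{\tfrac14,\tfrac15,\tfrac25,\tfrac37,\tfrac13\}$. From $a=\frac{2-4/n}{s}$ one gets $\frac{sp}{q}=2-\frac4n$, hence
$$n=\frac{4q}{2q-sp},$$
so $2q-sp$ must be a positive divisor of $4q$; since $n\ge5$ this forces $\tfrac{6q}{5}\le sp<2q$, leaving only finitely many $s$. Likewise $a=\frac{1-4/n}{s}$ gives $n=\frac{4q}{q-sp}$ with $q-sp$ a positive divisor of $4q$, and again only finitely many $s$. Running through the five values of $a$, the admissible integers $n\ge 5$ turn out to be: $\{8,16\}$ when $a=\tfrac14$; $\{5,10,20\}$ when $a=\tfrac15$ or $a=\tfrac25$; $\{6,12\}$ when $a=\tfrac13$; and $\{7,14\}$ when $a=\tfrac37$ --- in this last case the equation also produces $n=28$, which is excluded by hypothesis.

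It then remains to check, for each surviving pair $(a,n)$ (and the value of $s$ it came with), that $a$ or $1-a$ lies in $\{\frac2n,\frac4n,\frac13+\frac4{3n}\}$: e.g.\ $a=\tfrac14,\,n=8$ gives $a=\frac28$; $a=\tfrac15,\,n=5$ gives $1-a=\frac45$; $a=\tfrac15,\,n=10$ gives $a=\frac2{10}$; $a=\tfrac25,\,n=20$ gives $a=\frac25=\frac13+\frac4{60}$; $a=\tfrac37,\,n=14$ gives $a=\frac37=\frac13+\frac4{42}$; $a=\tfrac13,\,n=6$ gives $a=\frac13=\frac26$; and similarly for the remaining entries. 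Since the list is finite, this is a routine verification.

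I expect no conceptual obstacle here; the entire content of the lemma, and the precise reason $n=28$ has to be excluded, sits in this final check. Indeed, for $a=\tfrac37$ and $n=28$ the representation $a=\frac{1-4/28}{2}$ is legitimate, yet $\{\frac{2}{28},\frac{4}{28},\frac13+\frac4{84}\}=\{\frac1{14},\frac17,\frac8{21}\}$ contains neither $\frac37$ nor $\frac47$. So the only real work is (i) confirming that the divisibility conditions admit no candidates for $n$ beyond those listed, and (ii) carrying out the finite table of substitutions without dropping one of the ten sub-cases.
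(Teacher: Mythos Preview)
Your proposal is correct and follows essentially the same approach as the paper: a case-by-case divisibility analysis for each of the five values of $a$, reducing the admissible $n$ to a short finite list and then verifying directly. The only cosmetic difference is that the paper solves for $s$ in terms of $n$ (concluding that $n$ divides a fixed integer such as $16$, $20$, $28$, or $12$), whereas you solve for $n$ in terms of $s$; both routes produce the same candidate lists and the same final check, including the identification of $n=28$ with $a=\tfrac37$ as the unique failing case.
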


Lemma \ref{t:lemma5} and Lemma \ref{t:lemma6} will be proved after the proof of Theorem \ref{t:tiling}.

\begin{proof}[Proof of Theorem \ref{t:tiling}]
	
	Suppose  to the contrary there is a tilling of a regular $n$-gon with right triangles of angles $\alpha \leq \frac\pi4$, $\frac{\pi}{2}-\alpha$ and  $\frac{\pi}{2}$, and $\alpha\not \in\left\{\frac{\pi}{n},\frac{2\pi}{n}, \frac{\pi} {6}+\frac{2\pi}{3n}\right\}$. Denote $a=\frac{2\alpha}{\pi}$.
	
	If $\alpha = \frac{\pi}{4}$, then the angle of regular $n$-gon should be a rational multiple of $\frac{\pi}{4}$, hence $n \in \{4, 8\}$. $n \geq 5$, for $n = 8$, $\alpha = \frac{2\pi}{8} = \frac{\pi}{4}$.
	
	Take a vertex of the $n$-gon. By $p,q,r$ we denote the number of smaller acute angles $\alpha$, bigger acute angles $\frac{\pi}{2}-\alpha$ and angles $\frac{\pi}{2}$, respectively at this vertex. Then $2-\frac{4}{n}=pa+q(1-a)+r$, where $0<a<\frac12$.  Then by Lemma \ref{t:lemma5}, $p>q$  and $\alpha\in\{ \frac{\pi-\frac{2\pi}{n}}{s},\frac{\frac{\pi}{2}-\frac{2\pi}{n}}{s}\}$ for some positive integer $s$.

Since $\alpha\not \in\{\frac{\pi}{n},\frac{2\pi}{n}, \frac{\pi} {6}+\frac{2\pi}{3n}\}$, by Lemma \ref{t:lemma6} it follows that, $\alpha\not\in\{\frac{\pi}{10},\frac{\pi}{5},\frac{3\pi}{14},\frac{\pi}{6},\frac{\pi}{8}\}$.

 For the triangles that have same vertices inside of regular $n$-gon not on the side of a triangle, denote $a=\frac{2\alpha}{\pi}$, by $p$ we denote the number of the smaller acute angles $\alpha$, by $q$ the number of greater acute angles $\frac{\pi}{2}-\alpha$, by $r$ the number of right angles. Then by Lemma \ref{t:lemma4} at any point inside of $n$-gon not on the side of a triangle $p\geq q$. 
 
 For the triangles that have same vertices on the side of $n$-gon or on the side of a triangle, we use Lemma \ref{t:lemma4}, substituting $r-2$ for $r$. Then at any point on the side of $n$-gon or on the side of a triangle $p\geq q$. 
 
 Hence the number of smaller acute angles is greater than the number of bigger acute angles. A contradiction.

\end{proof}

\begin{proof}[Proof of Lemma \ref{t:lemma5}]
Assume to the contrary that  $q-p\geq0$.

We have 
$$2-\frac{4}{n}=pa+q\left(1-a\right)+r=\left(q-p\right)\left(1-a\right)+p+r.$$ 

If $q-p=0$, then $2-\frac{4}{n}=p+r$, but $2>2-\frac{4}{n}>1$ and $p+r$ is integer. Then $q-p\geq 1$.

If $r+p>1$, then $1-a=\frac{2-\frac{4}{n}-p-r}{q-p}<0$. A contradiction. Thus either $r+p=1$ or $r+p=0$.

If $r+p=1$, then $1-a=\frac{1-\frac{4}{n}}{q-p}$. Hence $a=\frac{4}{n}$.

If $r+p=0$, then $1-a=\frac{2-\frac{4}{n}}{q-p}$. Hence $q-p\in\{2,3\}$. Then $a\in\{\frac{2}{n}, \frac13 +\frac{4}{3n}\}$.

A contradiction. Thus $p>q$.

We have
$$2-\frac{4}{n}=pa+q\left(1-a\right)+r=\left(p-q\right)a+q+r=sa+u\ .$$

Where $u=q+r$ and $s=p-q$. Hence $0 \leq r+q<2$.

Then $a=\frac{2-u-\frac4n}{s}$.

Since $2-u \in \{1, 2\}$, lemma is proved.

\end{proof}

\begin{proof}[Proof of Lemma \ref{t:lemma6}]

We provide proof for $z = as + \frac{4}{n} \in \{1,2\}$ so that $a=\frac{z-\frac{4}{n}}{s}$.

Suppose that $a=\frac{1}{4}=\frac{z-\frac{4}{n}}{s}$. Hence $s=4z-\frac{16}{n}$. Since $s$ and $n$ are positive integers, it follows that $n\in\{1,2,4,8,16\}$. But $n \geq 5$ and $a=\frac{1}{4}=\frac{2}{8}=\frac{4}{16}$. Thus for $a=\frac{1}{4}$, $a\in\{\frac{2}{n},\frac{4}{n}, \frac{1} {3}+\frac{4}{3n}\}$ for some positive integer $n$. 

Suppose that $a=\frac{1}{5}=\frac{z-\frac{4}{n}}{s}$. Hence $s=5z-\frac{20}{n}$. Since $s$ and $n$ are positive integers, it follows that $n\in\{1,2,4,5,10,20\}$. But $n \geq 5$ and $a=\frac{1}{5}=1-\frac45=\frac{2}{10}=\frac{4}{20}$. Thus for $a=\frac{1}{5}$, $a\in\{\frac{2}{n},\frac{4}{n}, \frac{1} {3}+\frac{4}{3n}\}$ for some positive integer $n$. 

Suppose that $a=\frac{2}{5}=\frac{z-\frac{4}{n}}{s}$. Hence $2s=5z-\frac{20}{n}$. Since $s$ and $n$ are positive integers, it follows that $n\in\{1,2,4,5,10,20\}$. But $n \geq 5$ and $a=\frac{2}{5}=\frac{2}{5}=\frac{4}{10}=\frac{1}{3}+\frac{4}{3\cdot20}$. Thus for $a=\frac{2}{5}$, $a\in\{\frac{2}{n},\frac{4}{n}, \frac{1} {3}+\frac{4}{3n}\}$ for some positive integer $n$. 

Suppose that $a=\frac{3}{7}=\frac{z-\frac{4}{n}}{s}$. Hence $3s=7z-\frac{28}{n}$. Since $s$ and $n$ are positive integers, it follows that $n\in\{1,2,4,7,14,28\}$. But $n \geq 5$, $n \neq 28$ and $a=\frac{3}{7}=\frac{1} {3}+\frac{4}{3\cdot14}=1-\frac{4}{7}$. Thus for, $a=\frac{3}{7}$ $a\in\{\frac{2}{n},\frac{4}{n}, \frac{1} {3}+\frac{4}{3n}\}$ for some positive integer $n$. 

Suppose that $a=\frac{1}{3}=\frac{z-\frac{4}{n}}{s}$. Hence $s=3z-\frac{12}{n}$. Since $s$ and $n$ are positive integers, it follows that $n\in\{1,2,3,4,6,12\}$. But $n \geq 5$ and $a=\frac{1}{3}=\frac{2}{6}=\frac{4}{12}$. Thus for $a=\frac{1}{3}$, $a\in\{\frac{2}{n},\frac{4}{n}, \frac{1} {3}+\frac{4}{3n}\}$ for some positive integer $n$. 

\end{proof}

\begin{remark}(Relation to earlier results)\label{t:remark} 
		
	
	(a) A tiling of a regular $n$-gon with right non-isosceles triangles with angle $\alpha\ne\frac{\pi}{4}$ is called {\it regular} if either 
	
	\quad $(a1)$ at each point of the (convex hull of the) $n$-gon the number of angles $\alpha$ is equal to the number of angles $\frac{\pi}{2}-\alpha$, or
	
	 \quad$(a2)$ at each point of the $n$-gon the number of angles $\alpha$ is equal to the number of angles $\frac{\pi}{2}$, or
	
	 \quad$(a3)$ at each point of the $n$-gon the number of angles $\frac{\pi}{2}-\alpha$ is equal to the number of angles $\frac{\pi}{2}$.

	(b)	From [L12, Theorem 2.1] it follows that a regular $n$-gon, $n \geq 5, n \neq 6$ cannot be regularly tiled with congruent right triangles.
	
	(c) Let us present a simple deduction of (b) from [L12, Theorem 2.1]. We consider cases (i,ii, ..., ix) listed in [L12, Theorem 2.1].
	If a regular $n$-gon, $n \geq 5, n\neq 6$, is regularly tiled with triangles with angles $\alpha, \frac{\pi}{2}-\alpha, \frac{\pi}{2}$, then one of the properties (a1), (a2), (a3) holds. Since $n \geq 5, n\neq 6$ the cases (i, iv, vi, ix) do not hold. Since the $n$-gon is regular and triangles are right, the cases (vii, viii) do not hold.
	
	In the case (ii) we have either $\alpha=\pi-\frac{2\pi}{n}$ or $\frac{\pi}{2}-\alpha=\pi-\frac{2\pi}{n}$ or $\frac{\pi}{2}=\pi-\frac{2\pi}{n}$. Since $n \geq 5$ it follows that $\pi-\frac{2\pi}{n} > \frac{\pi}{2}$. A  contradiction.
	
	In the case (iii) we have either $\alpha=\frac{2\pi}{n}$ or $\frac{\pi}{2}-\alpha=\frac{2\pi}{n}$; in both cases $\sin\alpha$, $\sin(\frac{\pi}{2}-\alpha)$, $\sin\frac{\pi}{2}$ should be commensurable. Since $\sin\frac{\pi}{2} = 1$ it follows that $\sin(\frac{\pi}{2}-\alpha)=\cos\alpha$ should be rational. Since $n\not\in\left\{1,2,3,4,6\right\}$, it follows that $\cos\frac{2\pi}{n}$ is not rational, see for example [S21, Theorem 4.1]. Hence the case (iii) does not hold.
	
	In the case (v) we have either $\alpha=\frac{2\pi}{n}$ or $\frac{\pi}{2}-\alpha=\frac{2\pi}{n}$ and $\frac{\sin\alpha}{\sin(\frac{\pi}{2}-\alpha)}=1$. Thus $\alpha = \frac{\pi}{2}-\alpha = \frac{\pi}{4}$, but $n>4$. Hence the case (v) does not hold.
	
	(d) In Theorem \ref{t:lazkovich} we have the assumption that $n \geq 25, n \neq 30, 42$. In [L12, Theorem 2.1] we have the assumption that the tiling is regular. We do not know if for any $5 \leq n \leq 24, n \in {30, 42}$ there are irregular tilings with right triangles with angle $\alpha\neq\frac{\pi}{n}$.
	
	(e) For the cases $n<5$ and $n=6$ see [L12, Theorem 2.1] and [S01, Theorem].
	
\end{remark}

[L12] M.Laczkovich, Tilings of Convex Polygons with Congruent Triangles, Discrete and Computational Geometry  48, (2012) 330-372 

[L20] M.Laczkovich, Irregular tilings of regular polygons with similar triangles, to appear in Discrete and Computational Geometry 

[S21] A. Skopenkov, Mathematics Through Problems: from olympiades and math circles to profession. Part I. Algebra. 2021 AMS, Providence,
 \linebreak https://www.mccme.ru/circles/oim/algebra\_eng.pdf

[S01] B. Szegedy, Tilings of the square with similar right triangles, Combinatorica 21 (1) (2001) 139–144

[V19] I. Vasenov, Tiling of regular n-gon with right triangles 
\linebreak https://www.mccme.ru/circles/oim/mmks/works2019/vasenov1.pdf

[V20] L. Vigdorchik, Tiling of regular polygon with similar right triangles \RNumb{2}, 
\linebreak   https://www.mccme.ru/circles/oim/mmks/works2020/vigdorchik1.pdf


\end{document}